\pgfplotsset{compat=newest}
\newcommand{\R}{{\mathbb{R}}}
\newcommand{\dd}{{\mathrm{d}}}
\newcommand{\activeset}{{\mathcal{A}}}
\newcommand{\A}{{\mathcal{S}}} 
\newcommand{\W}{{\mathcal{W}}} 
\newcommand{\I}{{\mathcal{I}}} 
\newcommand{\ns}{{n_\mathrm{s}}}
\newcommand{\Nfe}{{N_\mathrm{fe}}}
\newcommand{\paren}[1]{\left( #1 \right)}
\newcommand{\brac}[1]{\left[ #1 \right]}
\newcommand{\cbrac}[1]{\left\{ #1 \right\}}
\newcommand\paren*[1]{( #1 )}
\newcommand\brac*[1]{[ #1 ]}
\newcommand\cbrac*[1]{\{ #1 \}}
\newcommand{\vo}{\vec{o}\@ifnextchar{^}{\,}{}}
\newcommand{\Set}[2]{\left\{\, #1 \mid #2\,\right\}}
\newcommand\Set*[2]{\{\, #1 \mid #2\,\}}
\newcommand{\norm}[1]{{\left\Vert#1\right\Vert}}
\newcommand{\transp}[1]{{#1}^\top}
\newcommand{\C}{{\mathcal{C}}}
\newcommand{\xdot}{{\dot{x}}}
\newcommand{\proj}[2]{\mathrm{P}_{#1}\paren{#2}}
\newcommand\proj*[2]{\mathrm{P}_{#1}\paren*{#2}}
\newcommand{\tancone}[2]{\mathcal{T}_{#1}\paren{#2}}
\newcommand{\normcone}[2]{\mathcal{N}_{#1}\paren{#2}}
\newcommand{\nupds}{\nu}
\newcommand{\interior}[1]{\mathrm{int}(#1)}
\newcommand{\hatt}{\hat{t}}
\DeclareMathOperator*{\argmin}{arg\,min}
\newcommand{\AND}{\mathbin{\wedge}}
\newcommand{\OR}{\mathbin{\vee}}
\newtheorem{theorem}{Theorem}
\newtheorem{example}{Example}
\newtheorem{proposition}[theorem]{Proposition}
\begin{document}

\title{Finite Elements with Switch Detection for Numerical Optimal Control of Projected Dynamical Systems}
\author{
  Anton Pozharskiy$^{1}$, Armin Nurkanovi\'c$^{1}$, Moritz Diehl$^{1,2}$
  \thanks{This research was supported by DFG via Research Unit FOR 2401, project 424107692 and 525018088, by BMWK via 03EI4057A and 03EN3054B, and by the EU via ELO-X 953348.}
  \thanks{
  $^1$Department of Microsystems Engineering (IMTEK), 	
  $^2$Department of Mathematics, University of Freiburg, Germany,  
  \sloppy\texttt{\{anton.pozharskiy,armin.nurkanovic,moritz.diehl\} @imtek.uni-freiburg.de}
	}
}
\maketitle
\thispagestyle{empty} 
\begin{abstract}
  The Finite Elements with Switch Detection (FESD) method~\cite{Nurkanovic2022} is a highly accurate direct transcription method for optimal control of several classes of nonsmooth dynamical systems. 
  This paper extends the FESD method to Projected Dynamical Systems (PDS) and first-order sweeping processes with time-independent sets.
  This method discretizes an equivalent dynamic complementarity system and exploits the particular structure of the discontinuities present in these systems. 
  In the FESD method, allowing integration step sizes to be degrees of freedom, and introducing additional complementarity constraints, enables the exact detection of nonsmooth events.
  In contrast to the standard fixed-step Runge-Kutta methods, this approach allows for the recovery of full-order integration accuracy and the correct computation of numerical sensitivities.
  Numerical examples illustrate the effectiveness of the proposed method in an optimal control context.
  This method and the examples are included in the open-source software package~\texttt{nosnoc}.
\end{abstract}
\section{Introduction}
In this paper, we develop direct transcription methods for optimal control problems subject to Projected Dynamical Systems (PDS), and a particular class of First-Order Sweeping Processes (FOSwP).
Both of these systems belong to the class of discontinuous dynamical systems, where the trajectories are constrained to a set $\mathcal{C}$. 
In the case of PDS, first introduced by Dupuis and Nagurney~\cite{Dupuis1993}, the system evolves according to some smooth Ordinary Differential Equation (ODE) on the interior of the set $\mathcal{C}$.
If the trajectory reaches the boundary of $\C$, the vector field is projected onto the tangent cone to $\C$ at this point, such that the trajectory stays in $\C$.

A closely related concept is the first-order sweeping process, introduced by Moreau~\cite{Moreau1971}, in which we have a nominal trajectory and a possibly moving and state-dependent set $\mathcal{C}$.
If the trajectory is in contact with the boundary of the set, the vector field is modified such that the trajectory is swept together with the set.
There are clear structural similarities between these two notions, as both are dynamic systems constrained to a set.
These similarities are more than superficial if the set is not time-dependent and under mild assumption on the dynamics~\cite{Brogliato2006}.

The applications of PDS and FOSwP are various.
PDS have been applied to economic equilibrium problems~\cite{Nagurney1995}, vaccination strategies~\cite{Cojocaru2007}, and more recently to several kinds of controllers including hybrid-integrator gain systems~\cite{Sharif2019} and anti-windup controllers~\cite{Hauswirth2020}.
FOSwP models have also been an active area of research in recent years including the development of a Pontryagin-style maximum principle being derived for Mayer problems~\cite{Arroud2018}.
Applications include optimal control of crowd dynamics~\cite{Cao2021a}, unmanned marine surface vehicles~\cite{Cao2021,Mordukhovich2023}, and soft robotics~\cite{Colombo2021}.
Several of these applications model interesting physical systems via FOSwP with time-invariant feasible sets~\cite{Mordukhovich2023}.

For smooth dynamical systems, time-stepping methods are usually used for the direct transcription of optimal control systems.
Several existing algorithms could fall under the umbrella of time-stepping methods for PDS and FOSwP.
In particular, for PDS there are several time-stepping algorithms proposed in Chapter 4 of~\cite{Nagurney1995}, which apply a projection to each instance of the derivative evaluation in a fixed time step Runge-Kutta (RK) method.
For FOSwP, the initial papers of Moreau~\cite{Moreau1971} presented the so-called ``catching-up algorithm''.

However, the difficulty of both the simulation, and use in direct optimal control, of these methods comes from the fact that these systems have a nonsmooth and possibly discontinuous vector field.
The nonsmoothness of the derivative leads to reduced integration order accuracy in fixed step size RK methods~\cite{Acary2008}.
Furthermore, the nonsmoothness may lead to incorrect sensitivities (i.e., derivatives of the integration state transition map with respect to parameters) and convergence to spurious solutions as discussed in~\cite{Nurkanovic2020,Stewart2010}.

In PDS, the event of the trajectory entering or leaving the boundary of the feasible set $\mathcal{C}$ is called a switch, and the corresponding time is a switching time. 
The trajectories of the system between switching times are smooth functions of time. 
If a numerical method can correctly identify the switching times, then the limitations of time-stepping methods can be overcome.
This is achieved for instance with the recently introduced Finite Elements with Switch Detection (FESD) method, which recovers the accuracy of an underlying RK method and computes correct numerical sensitivities. 
This method has so far been developed for piecewise smooth systems~\cite{Nurkanovic2022} and nonsmooth mechanical systems with impacts and friction~\cite{Nurkanovic2024}.
However, it cannot be directly applied to PDS and FOSwP. 

\paragraph*{Contributions}
In this paper, we extend the FESD method for both PDS and a class of FOSwP.
We exploit the fact that these two systems are equivalent to a specific Dynamic Complementarity System (DCS).
The FESD method is then developed for this DCS. 
We study the continuity properties of the algebraic and differential states of the DCS.
These are exploited to derive appropriate cross-complementarity conditions in the FESD discretization, which ensure exact switch detection.
It is proven that the switches are correctly identified in the discretized system.
Moreover, if there are no switches, we need to introduce step equilibration conditions, which remove the degrees of freedom in the step sizes.
In this paper, we propose a new step equilibration formulation, which is significantly less nonlinear than the initial formulation in~\cite{Nurkanovic2024,Nurkanovic2022}.
We verify the claims of recovered order accuracy of the discretization method via simulation.
Finally, we apply our discretization to a collaborative planar manipulation task, which is formulated as an optimal control problem.
\paragraph*{Notation}
Regard a closed set $\C\subseteq \R^n$. 
The boundary of the set is denoted by $\partial \C$, and its interior by $\interior{\C}$.
The tangent cone to $\C$ at $x$, denoted by $\tancone{\C}{x}$, is the set of all vectors $d\in \R^n$ for which there exists sequences $\{x_i\} \in \C$ and $\{t_i\}, t_i >0$, with $x_i \to x$ and $t_i\to0$, such that ${d = \lim_{i\to \infty} \frac{x_i-x}{t_i}}$.
The normal cone to $\C$ at $x$ is defined by $\normcone{\C}{x} = \Set{v\in\R^n}{\langle v,d\rangle \le 0, \forall d\in\tancone{C}{x}}$, for  $x\in \C$ and is defined to be the empty set for $ x\notin \C$.
Let $K\subseteq \R^n$ be a closed convex set.
The projection operator is defined via a convex optimization problem ${\proj{K}{x} =\argmin_{s \in K}\frac{1}{2}\norm{s-x}^2_2}$.
The set $\C$ is said to be prox-regular if for any $x\in\C$ the distance to the set is continously differentiable on $O\setminus\C$, where $O$ is an open neighborhood of $x$.

\section{Constrained dynamical systems}
In this section, we introduce details of the mathematical formalisms for the three equivalent kinds of systems studied in this paper.
We then introduce the optimal control problem that is the focus of this paper.
Finally, we provide a proposition that describes the continuity properties of the regarded classes of systems.
\subsection{Projected Dynamical Systems (PDS)}
\label{sec:PDS}
We regard the PDS:
\begin{equation}
  \label{eq:PDS}
  \xdot(t) = \proj{\tancone{\C}{x(t)}}{f(x(t))},
\end{equation}
with $x(0) \in \C$ and $f:\R^{n_x}\rightarrow\R^{n_x}$ is at least twice continuously differentiable.
Under these assumptions the state $x(t)$ stays in $\C$ for all time $t \in [0, \infty)$.
We further assume that the feasible set is finitely defined by ${\C = \Set*{x\in\R^n}{c(x)\ge 0}}$ with $c:\R^{n_x}\rightarrow \R^{n_c}$ being also at least twice continuously differentiable.
We assume that the Linear Independence Constraint Qualification (LICQ) is satisfied at all $x \in \C$, i.e., the columns of $\nabla c_k(x)$ for ${k\in\activeset(x) = \Set*{i\in \cbrac*{1,\ldots,n_c}}{c_i(x)=0}}$ are linearly independent.
Under LICQ, the tangent cone of $\C$ is equal to the convex polyhedral cone ${\tancone{\C}{x} = \Set*{v\in\R^{n}}{\transp{\nabla c_i(x)}v \ge 0, i \in\activeset(x)}}$~\cite{Rockafellar1997}.
\subsection{First-Order Sweeping Processes}
In this paper, we treat a class of differential inclusions closely related to PDS, which are equivalent to a perturbed first-order sweeping process with a time-independent $\C$: 
\begin{align}\label{eq:MSP}
  \xdot(t)&\in f(x(t)) -\normcone{\C}{x(t)},
\end{align}
with $x(0) \in \C$, $f:\R^{n_x}\rightarrow\R^{n_x}$ twice continuously differentiable, and $\C$ finitely defined as before.
\subsection{PDS and FOSwP as Dynamic Complementarity Systems}
Regard the dynamic complementarity system:
\begin{subequations}
  \label{eq:GCS}
  \begin{align}
    \xdot(t) &= f(x(t)) + \nabla c(x(t))\lambda(t),\\
    0 &\le c_i(x(t)) \perp \lambda_i(t) \ge 0, \quad i=1,\ldots,n_c, \label{eq:GCS:comps}
  \end{align}
\end{subequations}
with the differential state $x\in\R^{n_x}$ and the algebraic state ${\lambda\in\R^{n_c}}$. The functions  $f:\R^{n_x}\rightarrow\R^{n_x}$ and $c:\R^{n_x}\rightarrow\R^{n_c}$ are assumed to be at least twice continuously differentiable. 
The notation $c_i(x(t)) \perp \lambda_i(t)$ represents the algebraic constraint $c_i(x(t))\lambda_i(t) = 0$, i.e., at least one of $c_i(x(t))$ or $\lambda_i(t)$ must be zero.
\begin{figure}[t]
  \centering
  \vspace{0.1cm}
  \begin{tikzpicture}
  \begin{axis}[
    width=\linewidth, 
    height=(1.6/2.1)*\linewidth,
    xmin=-1.2,xmax=2.1,
    ymin=-1.3,ymax=1.6,
    axis x line=center, axis y line=center,
    xlabel={$x_1$},
    xlabel style={below right},
    ylabel={$x_2$},
    ylabel style={above left},
    axis line style={shorten >=-3pt},
    view={0}{90}
    ]
    \draw[-,color=Mahogany, ultra thick] (-1, 0) arc ({180}:{270}:1) -- ({sqrt(3)}, -1) arc ({-30}:{45}:2);
    \addplot[only marks, mark=*, ultra thick, color=Mahogany] ({sqrt(2)},{sqrt(2)}) node[below left] {$x(0)$};
    \node[color=Mahogany,] at (1.65, -0.5) {$x(t)$};
    \draw[dashed,color = gray, thick] (-1.2,-1) -- (2.1, -1) node[pos=0.075, above, yshift=1pt] {$c(x)\mathord{=}0$};
    \addplot3[domain=-1:2, y domain=-0.6:1.5,color=OliveGreen,-stealth, samples = 10, samples y=6,quiver={u=y, v=-x, scale arrows=0.15},thick]{0};
    \addplot[domain=0:2,color=OliveGreen,-stealth,samples=7,quiver={u=y, v=0, scale arrows=0.15}, thick]{-1};
    \addplot[domain=0.33:2,color=Plum,-stealth,samples=6,quiver={u=y, v=-x, scale arrows=0.15}, thick]{-1};
    \addplot[domain=-1:-.33,color=OliveGreen,-stealth,samples=3,quiver={u=y, v=-x, scale arrows=0.15},thick]{-1};
    \fill[pattern={Lines[angle=-45,distance=5.0]}, pattern color=gray] (-1.2,-1) rectangle (2.1,-2);
  \end{axis}
\end{tikzpicture}
  \vspace{-0.9cm}
  \caption{Plot of a trajectory of the PDS described in \Cref{ex:simple-pds} along with its projected gradient field in green, and original gradient field in purple.}
  \vspace{-0.7cm}
  \label{fig:ex1-sol}
\end{figure}
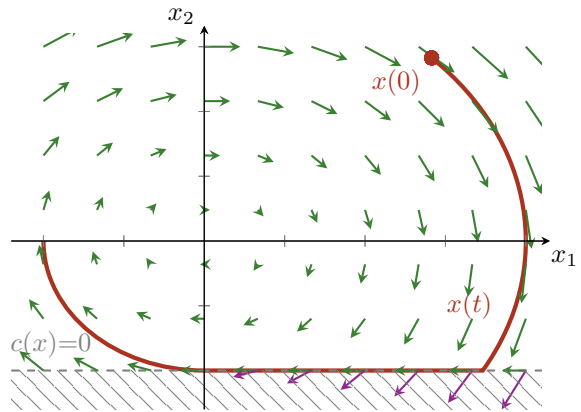

To develop the FESD discretization in future sections we rely on the equivalence of both \cref{eq:PDS} and \cref{eq:MSP} with \cref{eq:GCS}.
We formulate this as the following proposition.
\begin{proposition}
  Regard ${\C = \Set{x\in\R^{n_x}}{c(x) \ge 0}}$, a prox-regular set, and suppose that LICQ holds for all $x\in\C$.
  The following systems are equivalent:
  \vspace{-0.15cm} 
  \begin{enumerate}[label=\roman*)]
  \item the PDS, $\xdot(t) = \proj{\tancone{C}{x(t)}}{f(x(t))}$,
  \item the differential inclusion, $\xdot(t)\in f(x(t)) -\normcone{\C}{x(t)}$, 
    under the condition that for ${t\in [0,T]}$, $\xdot(t)$ is of minimum norm, i.e.,
    $\xdot(t) = \argmin_{v\in f(x(t)) -\normcone{\C}{x(t)}} \norm{v}$,
  \item DCS, \cref{eq:GCS}.
  \end{enumerate}
\end{proposition}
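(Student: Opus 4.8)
The plan is to establish the two equivalences (i)$\Leftrightarrow$(ii) and (ii)$\Leftrightarrow$(iii), each first as a pointwise identity of right-hand sides and then lifted to trajectories; here ``equivalent'' is meant in the sense that $x(\cdot)$ solves one system if and only if it solves the others, with $\lambda(\cdot)$ the accompanying algebraic state in \cref{eq:GCS}. Throughout, LICQ is used only to make the tangent and normal cones of $\C$ explicit, while prox-regularity (together with the cited well-posedness theory) is used only to ensure that solutions exist, are unique, and remain in $\C$, so that $c(x(t))\ge 0$ holds along every trajectory. The starting point is the cone calculus at a point $x\in\C$: the excerpt already gives $\tancone{\C}{x}=\Set{v\in\R^{n_x}}{\transp{\nabla c_i(x)}v\ge 0,\ i\in\activeset(x)}$ under LICQ, a closed convex polyhedral cone, whose polar cone is, by the Farkas lemma, $\normcone{\C}{x}=\Set{-\nabla c(x)\lambda}{\lambda\ge 0,\ \lambda_i=0\text{ for }i\notin\activeset(x)}$. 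Since $x\in\C$ gives $c(x)\ge 0$, the support condition $\lambda_i=0$ for $i\notin\activeset(x)$ is precisely the complementarity $0\le c_i(x)\perp\lambda_i\ge 0$. Hence the pointwise inclusion $\xdot\in f(x)-\normcone{\C}{x}$ with $x\in\C$ is equivalent to the existence of $\lambda$ with $\xdot=f(x)+\nabla c(x)\lambda$ and $0\le c_i(x)\perp\lambda_i\ge 0$, and under LICQ this $\lambda$ is unique; this identifies the solution set of \cref{eq:GCS} with that of the inclusion in (ii), \emph{modulo} the minimum-norm selection, and makes the correspondence $x(\cdot)\leftrightarrow(x(\cdot),\lambda(\cdot))$ explicit (with $\lambda$ measurable as the unique solution of a linear system with measurable coefficients).

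For (i)$\Leftrightarrow$(ii) I would invoke Moreau's conic decomposition: since $\normcone{\C}{x}$ is the polar of the closed convex cone $\tancone{\C}{x}$, every vector decomposes orthogonally as $f(x)=\proj{\tancone{\C}{x}}{f(x)}+\proj{\normcone{\C}{x}}{f(x)}$. Consequently $\proj{\normcone{\C}{x}}{f(x)}$ is the point of $\normcone{\C}{x}$ nearest $f(x)$, so the minimizer of $\norm{v}$ over $v\in f(x)-\normcone{\C}{x}$ is exactly $f(x)-\proj{\normcone{\C}{x}}{f(x)}=\proj{\tancone{\C}{x}}{f(x)}$. Thus the minimum-norm selection in (ii) evaluates pointwise to the projected field of (i), giving (i)$\Leftrightarrow$(ii) along trajectories.

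It remains to upgrade the correspondence between (ii) and (iii) so that it carries the minimum-norm property, that is, to show that \emph{any} solution $(x(\cdot),\lambda(\cdot))$ of \cref{eq:GCS} automatically satisfies $\xdot(t)=\proj{\tancone{\C}{x(t)}}{f(x(t))}$ for a.e.\ $t$. From the first step $\xdot(t)\in f(x(t))-\normcone{\C}{x(t)}$, and by the Moreau characterization of $\proj{\tancone{\C}{x(t)}}{\cdot}$ it suffices to check that (a) $\xdot(t)\in\tancone{\C}{x(t)}$ and (b) $\langle\xdot(t),\nabla c(x(t))\lambda(t)\rangle=0$. Both follow from the elementary fact that an absolutely continuous $g\colon[0,T]\to\R$ with $g\ge 0$ satisfies $g'(t)=0$ at every interior differentiability point with $g(t)=0$, hence a.e.\ on $\{g=0\}$. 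Applying this to $g=c_i(x(\cdot))$, which is nonnegative by \cref{eq:GCS:comps}: at a.e.\ $t$ and for $i\in\activeset(x(t))$ we get $\transp{\nabla c_i(x(t))}\xdot(t)=\frac{\dd}{\dd t}c_i(x(t))=0$, which is (a); and for each $i$ either $c_i(x(t))>0$, so $\lambda_i(t)=0$, or $c_i(x(t))=0$, so $\frac{\dd}{\dd t}c_i(x(t))=0$, whence $\lambda_i(t)\transp{\nabla c_i(x(t))}\xdot(t)=0$ in either case, and summing over $i$ yields (b). The converse --- a PDS/FOSwP solution produces a solution of \cref{eq:GCS} --- is immediate from the first step by reading off $\lambda(t)$ from $\proj{\normcone{\C}{x(t)}}{f(x(t))}=-\nabla c(x(t))\lambda(t)$.

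I expect the main obstacle to be precisely this last step: a priori \cref{eq:GCS} enforces only the complementarity pair and $c(x)\ge 0$, so one must exclude spurious solutions whose velocity ``overshoots'' into $\interior{\C}$, and it is the measure-theoretic lemma on zero sets of nonnegative absolutely continuous functions, applied componentwise to $c(x(\cdot))$, that rules these out. Minor additional care is needed for the standard measurable-selection argument for $\lambda$, and for invoking prox-regularity (rather than merely LICQ) exactly where existence, uniqueness, and $\C$-invariance of the trajectories are used.
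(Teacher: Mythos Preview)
Your argument is correct and more detailed than what the paper itself offers: the paper does not prove this proposition at all but simply defers to \cite[Theorem~1]{Brogliato2006} for the convex case and to \cite[Proposition~5]{Serea2003} for the prox-regular extension, together with a remark that the hypomonotonicity-type hypothesis on $f$ in those references is automatic for a single-valued continuous map. By contrast, you unpack the mechanism explicitly: the LICQ-based Farkas representation of $\normcone{\C}{x}$ gives the DCS/inclusion correspondence, Moreau's conic decomposition yields (i)$\Leftrightarrow$(ii), and the measure-theoretic lemma on a.e.\ vanishing derivatives at zeros of a nonnegative absolutely continuous function closes the loop by showing that every DCS solution already selects the minimum-norm velocity. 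This last point is exactly the non-obvious ingredient, and your treatment of it is clean. What your approach buys is a self-contained argument that also makes the role of LICQ transparent (convexity of $\tancone{\C}{x}$ so that Moreau applies, and uniqueness of $\lambda$); what the paper's citation-based approach buys is brevity and a pointer to results stated at a greater level of generality for prox-regular sets and set-valued $f$.
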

\vspace{-0.25cm}
A proof of this for convex sets can be found in~\cite[Theorem 1]{Brogliato2006}.
In this theorem there are technical assumptions made on the behavior of $f(x)$, which in the setting here is implied by the continuity and single-valuedness of $f(x)$~\cite[Example 12.28]{Rockafellar1997}.
This equivalence has been extended to prox-regular sets in \cite[Proposition 5]{Serea2003}.
\begin{example}\label{ex:simple-pds}
  To illustrate the equivalence regard the following PDS: $x\in \R^2$, $f(x) = (x_2,-x_1)$, and $c(x) = x_2+1$.
  The tangent cone of $\C$ reads as: ${\tancone{\C}{x} = \{v\in\R^2 \mid (0,1)^\top v\geq 0\}}$, if $x_2+1 = 0$, and $\tancone{\C}{x} = \R^2$, if $x_2+1 > 0$.
  The normal cone for the set $\C$ is $\normcone{\C}{x} = \Set{v \in \R^2}{ (0,\lambda), \lambda\geq0}$, if $x_2+1 = 0$ and $\normcone{\C}{x} =\cbrac{0}$, if $x_2+1 > 0$.
  Finally, we write the DCS equivalent to this system:
  \begin{equation*}
    \xdot = (x_2, -x_1 + \lambda),\   0 \le x_2+1 \perp \lambda \ge 0.
  \end{equation*}
  \Cref{fig:ex1-sol} shows an example trajectory of this system and the corresponding vector fields.
\end{example}
\subsection{Optimal Control of Constrained Dynamical Systems}
The FESD discretization we propose is then used to discretize and approximately solve Optimal Control Problems (OCPs) with dynamics governed by \cref{eq:GCS}:
\begin{mini!}[3]
  {\substack{x(\cdot),u(\cdot),\lambda(\cdot)}}
  {\int_{0}^{T} L(x(t),u(t))\dd t +  R(x(T))}{\label{eq:ocp}}
  {}
  \addConstraint{x(0)}{=x_0}
  \addConstraint{\xdot(t)}{= f(x(t),u(t)) + \nabla c(x(t))\lambda(t),\ }{t \in [0, T]}
  \addConstraint{0}{\leq c(x(t))\perp \lambda(t)\geq 0,\ }{t \in [0,T]}
  \addConstraint{0}{\geq g_{\mathrm{p}}(x(t),u(t)),\ }{t \in [0,T]}
  \addConstraint{0}{\geq g_{\mathrm{t}}(x(T)).}
\end{mini!}
With the Lagrange cost term $L:\R^{n_x}\times\R^{n_u}\rightarrow\R$, Mayer cost term $R:\R^{n_x}\rightarrow\R^{n_u}$, initial state $x_0\in\R^{n_x}$, path constraints $g_{\mathrm{p}}:\R^{n_x}\times\R^{n_u}\rightarrow\R^{n_{g_\mathrm{p}}}$, terminal constraints ${g_{\mathrm{t}}:\R^{n_x}\rightarrow\R^{n_{g_\mathrm{t}}}}$ and controls $u(t)\in\R^{n_u}$.
We introduce here exogenous controls, which were not in the original dynamics but, under mild conditions, the systems remain equivalent~\cite{Serea2003}.
\subsection{Types of Switches in PDS}\label{sec:types-of-switches}
It is useful to discuss the possible discontinuities present in this class of systems, in order to motivate our extension of FESD.
We note that based on the smoothness assumption on $f(x)$, for $x\in\interior{\C}$, $\xdot$ is smooth.
Therefore we only need to regard switches that occur on interactions of the solution $x(t)$ with $\partial\C$.
We define the following sets which define the the mode of the system at a given $x,\ \lambda$:
\begin{subequations}
  \label{eq:activity-sets}
  \begin{align*}
    \A(x,\lambda) = \Set{k\in\cbrac{1,\ldots, n_c}}{c_k(x) = 0,\ \lambda_k > 0},\\
    \W(x,\lambda) = \Set{k\in\cbrac{1,\ldots, n_c}}{c_k(x) = 0,\ \lambda_k = 0},\\
    \I(x,\lambda) = \Set{k\in\cbrac{1,\ldots, n_c}}{c_k(x) > 0,\ \lambda_k = 0}.
  \end{align*}
\end{subequations}
These are the index sets for constraints that are strongly active, weakly active, and inactive, respectively.
For a given $x,\ \lambda$, these sets form a partition of $\cbrac{1,\ldots,n_c}$.
We say that a constraint $k$ becomes active at $\hatt$ if for some $\epsilon >0$, $k\in\I(x(t),\lambda(t))$ for $t\in [\hatt-\epsilon,\hatt)$ and ${k\in\A(x(t),\lambda(t))\cup\W(x(t),\lambda(t))}$ for $t\in [\hatt,\hatt+\epsilon)$.
Similarly a constraint $k$ becomes inactive at $\hatt$ if for some $\epsilon >0$, $k\notin\I(x(t),\lambda(t))$ for $t\in [\hatt-\epsilon,\hatt)$ and $k\in\I(x(t),\lambda(t))$ for $t\in [\hatt,\hatt+\epsilon)$.
We now prove the following proposition on the behaviors of $c(x(t))$ and $\lambda(t)$. 
\begin{proposition} \label{prop:switches}
  Regard a DCS of the form \cref{eq:GCS} and its solutions $x(t)$, $\lambda(t)$ on the interval $t\in [0, T]$.
  The following statements are true:
  \begin{enumerate}
  \item $c(x(t))$ is a continuous function of time.
  \item $\lambda_k(t)$ is discontinuous at $\hatt\in[0,T]$ if the $k^{\mathrm{th}}$ constraint becomes active at $\hatt$, and $f(x(\hatt)) \notin \tancone{\C}{x(\hatt)}$.
  \item $\lambda_k(t)$ is continuous at $\hatt\in[0,T]$ if the $k^{\mathrm{th}}$ constraint becomes inactive at $\hatt$.
  \end{enumerate}
\end{proposition}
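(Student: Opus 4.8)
My plan is to handle the three assertions in turn; the second and third both reduce to differentiating the identity $c_k(x(\cdot))\equiv 0$, which holds on whichever one-sided neighborhood of $\hatt$ the $k$th constraint is active, and then using LICQ to solve for the $k$th component of $\lambda$. Assertion (1) is immediate: a solution of \cref{eq:GCS} has $x(\cdot)$ absolutely continuous — in fact locally Lipschitz, since on $\C$ the projected field $\proj{\tancone{\C}{x}}{f(x)}$ is bounded on bounded sets — hence continuous, and composing with the $C^2$ map $c$ shows $t\mapsto c(x(t))$ is continuous. The point worth stressing is that, unlike $\lambda$, this quantity depends only on the continuous state $x$.

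\textbf{Assertion (2).} Since the $k$th constraint becomes active at $\hatt$, I can choose $\epsilon>0$ with $\lambda_k\equiv 0$ on $[\hatt-\epsilon,\hatt)$ and $c_k(x(\cdot))\equiv 0$ on $[\hatt,\hatt+\epsilon)$, shrinking $\epsilon$ so that the active set $\mathcal B=\activeset(x(t))$ is constant on $(\hatt,\hatt+\epsilon)$. Differentiating $c_i(x(t))\equiv 0$ for $i\in\mathcal B$ and substituting $\xdot=f(x)+\nabla c(x)\lambda$ (with $\lambda_j\equiv 0$ for $j\notin\mathcal B$) gives $\transp{G(t)}G(t)\,\lambda_{\mathcal B}(t)=-\transp{G(t)}f(x(t))$, where $G(t)$ collects the active gradients $\nabla c_i(x(t))$, $i\in\mathcal B$; LICQ makes $\transp{G(t)}G(t)$ invertible, so $\lambda_{\mathcal B}(\cdot)$ is continuous on $[\hatt,\hatt+\epsilon)$ and $\lambda_k(\hatt^+)$ exists. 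It is nonnegative because complementarity passes to the limit, and $f(x(\hatt))\notin\tancone{\C}{x(\hatt)}$ is exactly what makes it strictly positive — in the base case $\activeset(x(\hatt))=\cbrac{k}$ this is transparent, since $\lambda_k(\hatt^+)=-\transp{\nabla c_k(x(\hatt))}f(x(\hatt))/\norm{\nabla c_k(x(\hatt))}^2$ and $f(x(\hatt))\notin\tancone{\C}{x(\hatt)}=\Set{v}{\transp{\nabla c_k(x(\hatt))}v\ge 0}$ says precisely $\transp{\nabla c_k(x(\hatt))}f(x(\hatt))<0$. As $\lambda_k(\hatt^-)=0\neq\lambda_k(\hatt^+)$, $\lambda_k$ is discontinuous at $\hatt$.

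\textbf{Assertion (3).} Here $c_k(x(\cdot))>0$, hence $\lambda_k\equiv 0$, on $[\hatt,\hatt+\epsilon)$, while $c_k(x(\cdot))\equiv 0$ on $[\hatt-\epsilon,\hatt)$, so it suffices to show $\lambda_k(\hatt^-)=0$. The Gram-matrix computation above, applied on $[\hatt-\epsilon,\hatt)$ with $k\in\mathcal B$, shows $\lambda_k(\hatt^-)$ exists and is nonnegative. For the opposite inequality I would use that $c_k(x(\hatt))=0$ with $c_k(x(t))>0$ for $t>\hatt$ forces $\transp{\nabla c_k(x(\hatt))}\xdot(\hatt^+)\ge 0$, together with continuity of the projected field across the release: $\xdot(\hatt^-)=\proj{\tancone{\C}{x(\hatt)}}{f(x(\hatt))}$, the right limit $\xdot(\hatt^+)$ is the projection of $f(x(\hatt))$ onto the larger cone obtained by dropping the $k$th constraint, and the inequality just noted shows that larger-cone projection already lies in $\tancone{\C}{x(\hatt)}$, so the two coincide. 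Then $\nabla c(x(\hatt))\lambda(\hatt^-)=\xdot(\hatt^-)-f(x(\hatt))=\xdot(\hatt^+)-f(x(\hatt))=\nabla c(x(\hatt))\lambda(\hatt^+)$ with $\lambda_k(\hatt^+)=0$, and LICQ-uniqueness of the multipliers forces $\lambda_k(\hatt^-)=0$. (In the base case $\mathcal B=\cbrac{k}$: $\lambda_k(\hatt^-)=-\transp{\nabla c_k(x(\hatt))}f(x(\hatt))/\norm{\nabla c_k(x(\hatt))}^2$ is $\le 0$ by that inequality and $\ge 0$ by complementarity, hence $0$.)

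The differentiation steps are routine; I expect the genuine obstacles to be (i) guaranteeing that $\lambda$ has one-sided limits at $\hatt$ and that the active set does not chatter in a one-sided neighborhood of $\hatt$ — i.e., ruling out accumulation of switches — and (ii) in assertion (3), upgrading ``$\lambda_k(\hatt^-)$ exists'' to ``$\lambda_k(\hatt^-)=0$'', which is where continuity of the projected vector field through a constraint release, combined with LICQ, is indispensable. Both points are clean in the single-active-constraint picture and carry over to the general case via the Gram-matrix formula.
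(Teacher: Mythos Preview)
Your proposal is correct and follows the same core route as the paper: differentiate the identity $c_k(x(t))\equiv 0$ on the side of $\hatt$ where the $k$th constraint is active, solve for $\lambda_k$ (the paper writes this as the closed form $\lambda_k=\max\paren*{0,\paren*{\transp{\nabla c_k}\nabla c_k}^{-1}\transp{\nabla c_k}f}$, up to a sign), and compare one-sided limits. Two differences are worth noting. First, the paper explicitly restricts to ``a single switch at index $k$'', i.e., your base case $\activeset(x(\hatt))=\{k\}$, whereas you set up the full Gram-matrix system $\transp{G}G\,\lambda_{\mathcal B}=-\transp{G}f$ under LICQ and then specialize; this buys you a cleaner statement when several constraints are simultaneously active. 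Second, for assertion~(3) the paper argues directly from the sign of $\transp{\nabla c_k}f$: since $c_k$ must increase after $\hatt$, one has $\transp{\nabla c_k(x(\hatt))}f(x(\hatt))\ge 0$, which together with the closed form and $\lambda_k\ge 0$ forces $\lambda_k(\hatt^-)=0$. Your projection-matching argument --- showing $\xdot(\hatt^-)=\xdot(\hatt^+)$ and invoking LICQ-uniqueness of multipliers --- is a slightly more geometric alternative that extends more transparently to the multi-constraint case; in the base case it collapses to exactly the paper's sign computation. Finally, the regularity caveats you flag (existence of one-sided limits of $\lambda$, absence of chattering of the active set near $\hatt$) are not addressed in the paper's proof either; both arguments implicitly assume a well-defined active set on one-sided neighborhoods of $\hatt$.
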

\begin{proof}
  1) follows from the absolute continuity of $x(t)$~\cite{Nagurney1995} and smoothness of $c(x)$.
  For 2), we show that the left limit and the right limit of $\lambda(t)$ are not equal at the point when the $k^{\mathrm{th}}$ constraint becomes active.
  We treat a single switch at index $k$. 
  By differentiating \cref{eq:GCS:comps} we obtain:
  \begin{equation*}
    0\le \dot{c}=\transp{\nabla c_k}(f(x))+ {\nabla c_k}\lambda_k)\perp \lambda_k\ge 0.
  \end{equation*}
  This yields the closed-form solution:
  \begin{equation}
    \label{eq:lambda-closed-form}
    \lambda_k = \max\paren*{0,\paren*{\transp\nabla c_k{\nabla c_k}}^{-1}\nabla \transp c_kf(x,u)}.
  \end{equation}
  The left limit is ${\lim_{t\rightarrow \hatt^-} \lambda_k(t) = 0}$ by \cref{eq:GCS:comps}.
  The right limit is ${\lim_{t\rightarrow \hatt^+} \paren*{\transp{\nabla c_k}\nabla c_k}^{-1}\nabla \transp c_k f(x(t))}$ via \cref{eq:lambda-closed-form}, which is zero if $\lim_{t\rightarrow \hatt^+} \transp \nabla c_k f(x(t))=0,$ which implies that $f(x(t))\in \tancone{\C}{x(t)}$ at $\hatt$.
  Therefore, if this is not the case, the right and left limits of $\lambda_k(t)$ are different at $\hatt$.
  Finally, to prove 3) we first observe that $\dot{c}_k$ must become strictly positive for some interval $[\hatt,\hatt+\epsilon]$ with $\epsilon>0$ in order to for $c_k(x)=0$ to become inactive.
  Therefore the leaving of the boundary depends on the sign of $\nabla \transp{c_k} f(x,u)$.
  This combined with \cref{eq:lambda-closed-form} implies that at the time $\hatt$ where $c_k(x)=0$ becomes inactive, $\lim_{t\rightarrow \hatt^-}\transp{\nabla c_k(x)}f(x)=0$ and therefore the left limit $\lim_{t\rightarrow \hatt^-}\lambda(t)=0$.
  The right limit of $\lambda_k(t)$ at $\hatt$ is 0 by \cref{eq:GCS:comps}.
\end{proof}
A useful consequence of this proposition is that in all of these switching cases, the left limits of $c_k(x(t))$ and $\lambda_k(t)$  must be zero if there is a switch in the $k^\mathrm{th}$ constraint at~$\hatt$.
\section{The FESD Discretization for PDS and FOSwP}
In this section, we introduce an extension to the FESD approach to accurately discretize the DCS described in \cref{eq:GCS}.
We first write the RK discretization used and then introduce the so-called cross-complementarity and step-equilibration constraints to complete the method.
We also include a formalization of the notion of correct step identification inspired by the notions of the types of switches described in \Cref{prop:switches}.

\subsection{Runge-Kutta discretization with fixed time steps}
We regard a time interval $[0, T]$ divided into $\Nfe \ge 2$ finite elements which are each a single-step RK discretization.
In a discretized OCP, this could be a single control interval with a constant $u$.
We begin the definition of the FESD discretization by defining a fixed step size RK discretization scheme with $\ns$ stages that are defined by a Butcher tableau~\cite{Hairer1996} with entries $a_{i,j}$, $b_j$, $c_i$ for $i,j = 1,\ldots,\ns$.
For ease of exposition, we assume that the final grid point of the RK scheme is at the end of the step, i.e., $c_\ns = 1$.
However, this can be extended in a manner similar to~\cite{Nurkanovic2022}.
This time interval (e.g. the control stage) is partitioned by the grid points $(t_1,t_2,\ldots,t_\Nfe)$ with the $t_1=0$, $t_{n+1} = t_n+h_n$.
This yields a discrete-time system in the form:
\begin{subequations}
  \label{eq:discrete-dcs}
  \begin{align}
    &x_{n,i}\! = \! x_{n,0} + h_n\sum_{j=1}^{\ns}a_{i,j}(f(x_{n,j},u)+ {\nabla c(x_{n,j})}\lambda_{n,j}),\\
    &0 \le c(x_{n,i}) \perp \lambda_{n,i} \ge 0. \label{eq:discrete-dcs:comps}
  \end{align}
\end{subequations}
In \cref{eq:discrete-dcs}, $n = 1,\ldots,\Nfe$, $i = 1,\ldots,\ns$, $x_{n,i} \in\R^{n_x}$, ${\lambda_{i,j}\in \R^{n_c}}$, and $f(\cdot),\ c(\cdot)$ as in \cref{eq:GCS}.

To go from this discretization to the FESD discretization, we allow the length of each finite element $h_n$ to vary and add two sets of additional constraints which are called cross-complementarity and step-equilibration.
These additional constraints are included to enforce the correct detection of switches and to remove the spurious degrees of freedom $h$ when there are no switches.

\subsection{Cross Complementarity}
For the $n^{\mathrm{th}}$ RK integration step to recover its full order accuracy we require that the right-hand side of the ODE on that step is sufficiently smooth~\cite{Schumacher2004}.
This is equivalent to the fact that we have no switches on the interior of the finite element which can be written as:
\begin{subequations}
  \label{eq:set-disjointness}
  \begin{align*}
    \A(x_{n,i},\lambda_{n,i})\cap \I(x_{n,j},\lambda_{n,j})=\emptyset,\\
    \A(x_{n-1,\ns},\lambda_{n-1,\ns})\cap \I(x_{n,j},\lambda_{n,j})=\emptyset,\\
    \A(x_{n,i},\lambda_{n,i})\cap \I(x_{n-1,\ns},\lambda_{n-1,\ns})=\emptyset,
  \end{align*}
\end{subequations}
for $n= 2,\ldots,\Nfe$, $i,j=1,\ldots,\ns$.
These intersections encode that no constraint can be both active and inactive on the same finite element at different stage points.
To accomplish this we introduce the set of complementarity constraints:
\begin{subequations}
  \label{eq:cross-complementarity}
  \begin{align}
    0 \le c(x_{n,i}) \perp \lambda_{n,j} \ge 0,\label{eq:cross-complementarity:a}\\
    0 \le c(x_{n-1,\ns}) \perp \lambda_{n,j} \ge 0,\label{eq:cross-complementarity:b}\\
    0 \le c(x_{n,i}) \perp \lambda_{n-1,\ns} \ge 0,\label{eq:cross-complementarity:c}
  \end{align}
\end{subequations}
with $n = 2,\ldots,\Nfe$, $i = 1,\ldots,\ns$, and $j = 1,\ldots,\ns$, which encompass both the original complementarity pairs in \cref{eq:discrete-dcs}, and the new constraints which enforce implicit identification of the switching times.
For brevity we write $\A_{n,i}=\A(x_{n,i},\lambda_{n,i})$, and the same for $\W$ and $\I$.
Recall that the conditions for a switch described in \Cref{prop:switches} require that the left limit of both $c_k(x(t))$ and $\lambda_k(t)$ is zero. 
In the discrete-time setting, if a switch occurs, we require that these functions are zero at the right boundary points of a finite element, i.e., $c_k(x_{n,n_s}) = 0$ and $\lambda_{n,n_s,k} = 0$, where $\lambda_{n,n_s,k}$ is the $k^{\mathrm{th}}$ element of $\lambda_{n,n_s}$.
This can equivalently be written as $k \in \W_{n,n_s}$.
This is formalized in the following proposition.
\begin{proposition}
  \label{prop:switch-identification}
  For any feasible solution to the discrete FESD problem, the following conditions hold for any $n\in\cbrac{2,\ldots,\Nfe}$:
  \begin{subequations}
    \label{eq:weak-at-boundary}
    \begin{equation}
      \bigcup_{i=1}^{\ns}\A_{n-1,i}\cap\bigcup_{i=1}^{\ns}\I_{n,i} \subseteq \W_{n-1,\ns}\label{eq:weak-at-boundary:1},
    \end{equation}
    i.e. if for some $k$, $\exists i,j \in\cbrac{1,\ldots,\ns}$ such that $\lambda_{n-1,i,k}>0$ and $c_k{x_{n,j}} > 0$, then $c_k(x_{n-1,\ns}) = 0$ and $\lambda_{n-1,\ns,k} = 0$.
    \begin{equation}
      \bigcup_{i=1}^{\ns}\I_{n-1,i}\cap\bigcup_{i=1}^{\ns}\A_{n,i} \subseteq \W_{n-1,\ns}\label{eq:weak-at-boundary:2},
    \end{equation}
    i.e. if for some $k$, $\exists i,j \in\cbrac{1,\ldots,\ns}$ such that $\lambda_{n-1,i,k}>0$ and $c_k{x_{n,j}} > 0$, then $c_k(x_{n-1,\ns}) = 0$ and $\lambda_{n-1,\ns,k} = 0$. 
  \end{subequations}
\end{proposition}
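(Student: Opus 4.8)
The plan is to derive both inclusions directly from the complementarity conditions imposed in the FESD problem: the diagonal complementarities \cref{eq:discrete-dcs:comps}, the within-finite-element cross complementarities \cref{eq:cross-complementarity:a}, and the across-finite-element cross complementarities \cref{eq:cross-complementarity:b} and \cref{eq:cross-complementarity:c}. Fix $n\in\cbrac{2,\ldots,\Nfe}$ and an index $k$ lying in the left-hand side of the inclusion in question. Since membership in $\W_{n-1,\ns}$ is exactly the conjunction of the two scalar statements $c_k(x_{n-1,\ns})=0$ and $\lambda_{n-1,\ns,k}=0$, I would establish these two facts separately, in each case invoking the complementarity pair that couples one of the boundary quantities $c(x_{n-1,\ns})$, $\lambda_{n-1,\ns}$ to the stage at which activity (resp. inactivity) of constraint $k$ is witnessed.

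For \cref{eq:weak-at-boundary:1}, assume $k\in\bigcup_{i}\A_{n-1,i}\cap\bigcup_{i}\I_{n,i}$, so that there are stage indices $i_0,j_0$ with $c_k(x_{n-1,i_0})=0$, $\lambda_{n-1,i_0,k}>0$, $c_k(x_{n,j_0})>0$, and $\lambda_{n,j_0,k}=0$. To obtain $\lambda_{n-1,\ns,k}=0$ I would use \cref{eq:cross-complementarity:c} with stage index $j_0$: the product $c_k(x_{n,j_0})\,\lambda_{n-1,\ns,k}$ must vanish, and $c_k(x_{n,j_0})>0$ then forces $\lambda_{n-1,\ns,k}=0$. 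To obtain $c_k(x_{n-1,\ns})=0$ I would use the within-finite-element cross complementarity \cref{eq:cross-complementarity:a} inside finite element $n-1$ coupling $c(x_{n-1,\ns})$ with $\lambda_{n-1,i_0}$ (this reduces to the diagonal condition \cref{eq:discrete-dcs:comps} when $i_0=\ns$): the product $c_k(x_{n-1,\ns})\,\lambda_{n-1,i_0,k}$ vanishes, and $\lambda_{n-1,i_0,k}>0$ gives $c_k(x_{n-1,\ns})=0$. Hence $k\in\W_{n-1,\ns}$.

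The proof of \cref{eq:weak-at-boundary:2} is the mirror image. Taking $k\in\bigcup_{i}\I_{n-1,i}\cap\bigcup_{i}\A_{n,i}$, there are stage indices $i_0,j_0$ with $c_k(x_{n-1,i_0})>0$, $\lambda_{n-1,i_0,k}=0$, $c_k(x_{n,j_0})=0$, $\lambda_{n,j_0,k}>0$. Then \cref{eq:cross-complementarity:b} with stage index $j_0$ gives $c_k(x_{n-1,\ns})\,\lambda_{n,j_0,k}=0$, hence $c_k(x_{n-1,\ns})=0$; and the within-finite-element cross complementarity in finite element $n-1$ coupling $c(x_{n-1,i_0})$ with $\lambda_{n-1,\ns}$ gives $c_k(x_{n-1,i_0})\,\lambda_{n-1,\ns,k}=0$, hence $\lambda_{n-1,\ns,k}=0$. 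Again $k\in\W_{n-1,\ns}$.

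I do not expect a genuinely hard step: the argument is a short chain of scalar complementarity deductions, and the proposition is really the discrete transcription of the observation underlying \cref{prop:switches}, namely that a switch in constraint $k$ drives both $c_k$ and $\lambda_k$ to zero at the switching time, here at the finite-element boundary $x_{n-1,\ns}$. The only thing requiring care is the index bookkeeping: verifying that the particular complementarity pair invoked in each step is indeed among those imposed, and in particular checking the $n=2$ case, where the preceding finite element is the first one and one must make sure the required instance of \cref{eq:cross-complementarity:a} (or, in the subcase $i_0=\ns$, the diagonal condition \cref{eq:discrete-dcs:comps}) is available there as well.
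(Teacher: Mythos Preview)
Your proposal is correct and follows essentially the same approach as the paper: both arguments amount to pairing the witness of strong activity in one finite element with the witness of inactivity in the adjacent one and invoking the appropriate cross-complementarity from \cref{eq:cross-complementarity} to force both $c_k(x_{n-1,\ns})$ and $\lambda_{n-1,\ns,k}$ to zero. The only cosmetic difference is that the paper phrases the argument by contradiction (assume $k\notin\W_{n-1,\ns}$ and derive a violated complementarity), whereas you argue directly; the underlying scalar deductions are identical, and your caution about the availability of the within-element instance of \cref{eq:cross-complementarity:a} for the first finite element is an appropriate bookkeeping check.
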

\begin{proof}
  For compactness we write ${\A_n = \bigcup_{i=1}^{\ns}\A_{n,i}}$ and ${\I_n = \bigcup_{i=1}^{\ns}\I_{n,i}}$.
  
  \textit{\Cref{eq:weak-at-boundary:1}}: Assume that there exists a $k\in \A_{n-1}\cap\I_{n}$ such that $k\notin \W_{n-1,\ns}$.
  Recall that $k\notin \W_{n-1,\ns}$ implies $k\in A_{n-1,\ns}\cup \W_{n-1,\ns}$ or that either $\lambda_{n-1,\ns,k}>0$ or $c_k(x_{n-1,\ns})>0$.
  $\lambda_{n-1,\ns,k}>0$ yields a contradiction as there must exist an $i\in\cbrac{1,\ldots,\ns}$ such that $c_k(x_{n-1,i})>0$ which violates \cref{eq:cross-complementarity:a}.
  $c_k(x_{n-1,\ns})>0$ yields a contradiction as there must exist an $i\in\cbrac{1,\ldots,\ns}$ such that $\lambda_{n,i,k}>0$ which violates \cref{eq:cross-complementarity:b}.

  \textit{\Cref{eq:weak-at-boundary:2}}: Assume that there exists a $k\in \I_{n-1}\cap\A_{n}$ such that $k\notin \W_{n-1,\ns}$.
  $\lambda_{n-1,\ns,k}>0$ yields a contradiction as there must exist an $i\in\cbrac{1,\ldots,\ns}$ such that $c_k(x_{n,i})>0$ which violates \cref{eq:cross-complementarity:c}.
  $c_k(x_{n-1,\ns})>0$ yields a contradiction as there must exist an $i\in\cbrac{1,\ldots,\ns}$ such that $\lambda_{n-1,i,k}>0$ which violates \cref{eq:cross-complementarity:a}.
\end{proof}
\subsection{Step-Equilibration}
\begin{table}[t]
  \small
  \centering
  \vspace{0.2cm}
  \begin{tabular}{r| c c c c c c c}
    &$\sigma_c^B$&$\sigma_c^F$&$\sigma_\lambda^B$&$\sigma_\lambda^F$&$\pi_c$&$\pi_\lambda$&$\nupds$\\\hline
    Entering $\partial\C$:        &$+$&$0$&$0$&$+$&$+$&$+$&$+$\\
    Leaving $\partial\C$:         &$0$&$+$&$+$&$0$&$+$&$+$&$+$\\
    No switch in $\partial\C$:    &$0$&$0$&$+$&$+$&$0$&$+$&$0$\\
    No switch in $\interior{\C}$: &$+$&$+$&$0$&$0$&$+$&$0$&$0$\\
  \end{tabular}
  \caption{Case decomposition of the construction of $\nupds$, for a fixed finite element.
    $+$ means that the value is strictly positive and $0$ that it is strictly zero.}
  \vspace{-0.5cm}
  \label{tab:nupds}
\end{table}
If there are no switches, then \cref{eq:cross-complementarity} is implied by the standard complementarity conditions in \cref{eq:discrete-dcs:comps}.
This means, there are spurious degrees of freedom in $h_n$ that must be removed.
To do this we introduce the switch-indicator variable $\nupds_n$ which is positive if there is a switch at the boundary between finite element $n$ and $n-1$ and zero otherwise.
We treat a variable $x>0$ as true and $x=0$ as false.
For $x,y,z \ge 0$, we define Boolean operators $\AND$ and $\OR$ using sets of constraints:
\begin{align*}
  &z = x \OR y \iff z\ge x,\ z\ge y,\ z \le x+y,\\
  &z = x \AND y \iff z \le x,\ z \le y,\ z\ge x+y-\max(x,y).
\end{align*}
The $\max$ operator is implemented via the Karush–Kuhn–Tucker conditions of the linear program:
\begin{align*}
	\max(x,y) = \min_{z\in \R} z \quad \mathrm{s.t.} \ {z\ge x,\ z\ge y.}
\end{align*}
For a fixed $n \in \cbrac{2,\ldots, \Nfe}$ we define:
\begin{subequations}
  \label{eq:sigmas}
  \begin{alignat}{2}
    \sigma_{c,n}^B &= \sum_{j=1}^{\ns}c\paren{x_{n-1, j}},\quad&&\sigma_{c,n}^F = \sum_{j=1}^{\ns}c\paren{x_{n, j}}\label{eq:sigmas:c},\\
    \sigma_{\lambda,n}^B &= \sum_{j=1}^{\ns}\lambda_{n-1, j},\quad&&\sigma_{\lambda,n}^F = \sum_{j=1}^{\ns}\lambda_{n, j},\label{eq:sigmas:lam}
  \end{alignat}
\end{subequations}
which are the backward and forward sums of $c(x)$ (\cref{eq:sigmas:c}) and $\lambda$ (\cref{eq:sigmas:lam}).
Intuitively these sums are analogous to the active set of the system in the finite element before and after the $n^{\mathrm{th}}$ boundary.
Using these we define, ${\pi_{\lambda,n} =\sigma_{\lambda,n}^B\OR\sigma_{\lambda,n}^F,}\ {\pi_{c,n}=\sigma_{c,n}^B\OR\sigma_{c,n}^F,}$ and ${\nupds_n= \pi_{c,n}\AND\pi_{\lambda,n}}$. 
The property we desire $\nupds_n$ to have is verified as shown in \Cref{tab:nupds} for the four possible conditions that occur at the boundary between two finite elements.
To enforce that the integration step size stays constant in the case where switches do not occur, for each $n = 2,\ldots \Nfe$, we introduce additional constraints with $M>>1$:
\begin{equation*}
  {-\nu_nM \le \paren{h_{n-1}- h_n} \le \nu_nM}
\end{equation*}
This formulation of $\nu_n$ can be seen as a mixed linear complementarity problem, rather than the nonlinear complementarity formulations in~\cite{Nurkanovic2022}.
\subsection{Direct Optimal Control with FESD}\label{sec:discrete-ocp}
We first define the following compact notation for the elements of a single control stage: $\mathbf{x}$ which collects all $x_{n,i}$, $\boldsymbol{\lambda}$ which collects all $\lambda_{n,i}$, and $\mathbf{h}=(h_1,\ldots,h_\Nfe)$.
We also collect the equations in \cref{eq:discrete-dcs}, \cref{eq:cross-complementarity}, and the step equilibration constraints into $G_{\mathrm{FESD}}(\mathbf{x},u,\boldsymbol{\lambda},\mathbf{h})$.
In order to solve \cref{eq:ocp} using FESD we discretize the time domain into $N$ uniformly sized control intervals with piecewise constant controls $\mathbf{u}=(u_1,\ldots,u_N)$ write the discrete-time optimal control problem in the form:
\begin{mini!}
  {\mathcal{X},\mathbf{u},\Lambda,\mathcal{H}}
  {\sum_{m=1}^{N}{L}_{m}(\mathbf{x}_m,u_m) + R(\mathbf{x}_{N,\ns})}
  {}
  {}
  \addConstraint{x_0}{=\bar{x}_0}
  \addConstraint{0}{= G_{\mathrm{FESD}}(\mathbf{x}_m,u_m,\boldsymbol{\lambda}_m,\mathbf{h}_m)}
  \addConstraint{0}{= \sum \mathbf{h}_m - \frac{T}{N}}
  \addConstraint{0}{\ge  g_{\mathrm{p}}(\mathbf{x}_m,u_m)}
  \addConstraint{0}{\ge  g_{\mathrm{t}}(\mathbf{x}_{N,\ns}),}
\end{mini!}
with variables $\mathcal{X}=(\mathbf{x}_1,\ldots,\mathbf{x}_N)$, $\Lambda= (\boldsymbol{\lambda}_1,\ldots,\boldsymbol{\lambda}_N)$, and $\mathcal{H}=(\mathbf{h}_1,\ldots,\mathbf{h}_N)$, the discretized Lagrange cost term ${L}_m: \R^{\Nfe\ns n_x}\times \R^{n_u}\rightarrow \R$, and Mayer term ${R: \R^{n_x}\rightarrow\R}$.
The Nonlinear Program (NLP) that results from this discretization is of a particular class called a Mathematical Program with Complementarity Constraints (MPCC).
This class of problems is degenerate as the complementarity constraints violate LICQ at all feasible points.
These MPCCs are formulated using CasADi~\cite{Andersson2019} in \texttt{nosnoc}~\cite{Nurkanovic2022b} and then solved via a series of related, relaxed, NLPs~\cite{Nurkanovic2023e}.
The NLPs are solved using IPOPT~\cite{Waechter2006}.
\section{Applications}
\begin{figure}[t]
  \centering
  \vspace{0.2cm}
  \begin{subfigure}{0.49\linewidth}
    \includegraphics[width=\linewidth]{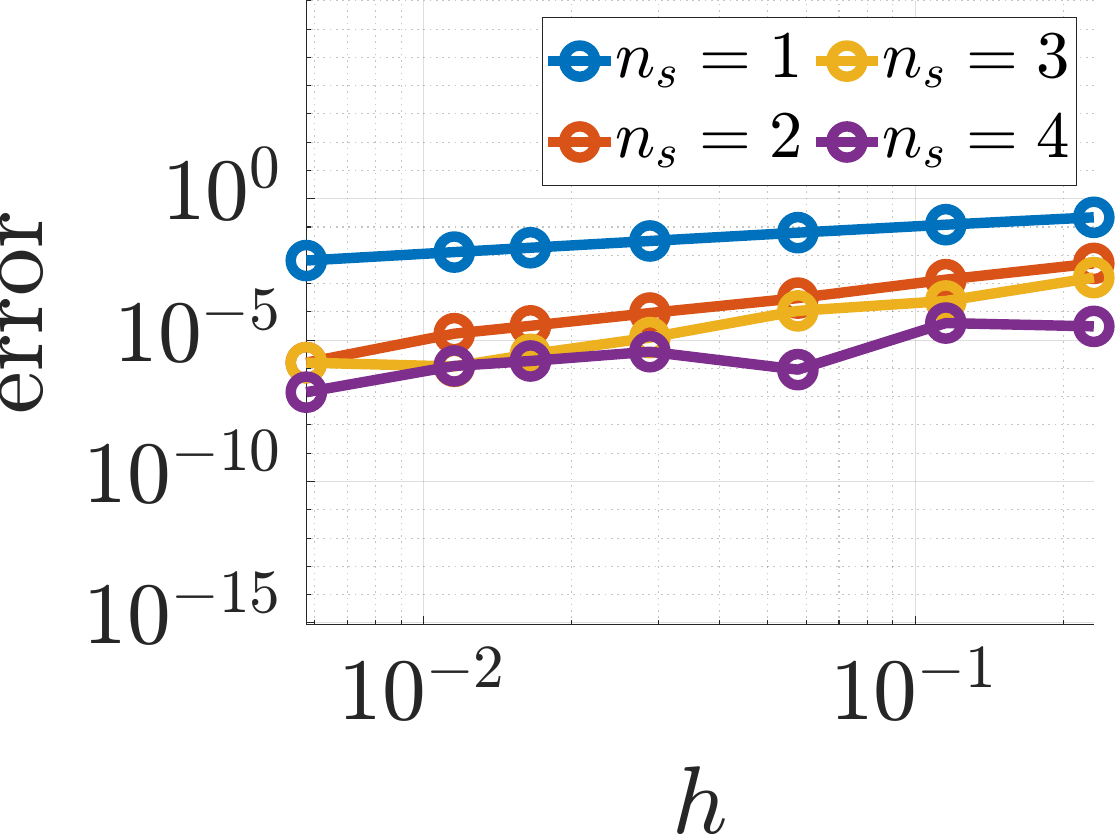}
    \caption{Fixed Time Stepping}
  \end{subfigure}
  \begin{subfigure}{0.49\linewidth}
    \includegraphics[width=\linewidth]{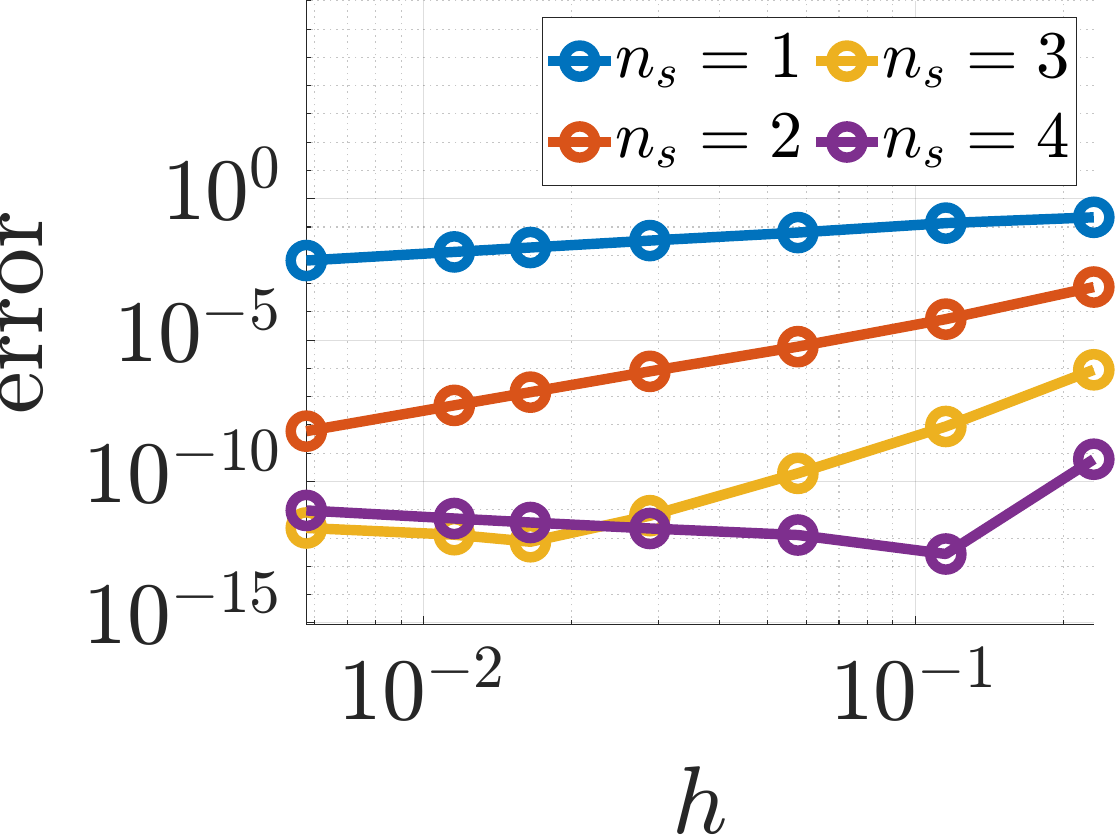}
    \caption{FESD}
  \end{subfigure}
  \vspace{-0.1cm}
  \caption{Plots of terminal error vs step size.}
  \vspace{-.5cm}
  \label{fig:order-plots}
\end{figure}

\begin{figure*}[t]
  \centering
  \vspace{0.2cm}
  \begin{subfigure}{0.19\linewidth}
    \includegraphics[width=\linewidth]{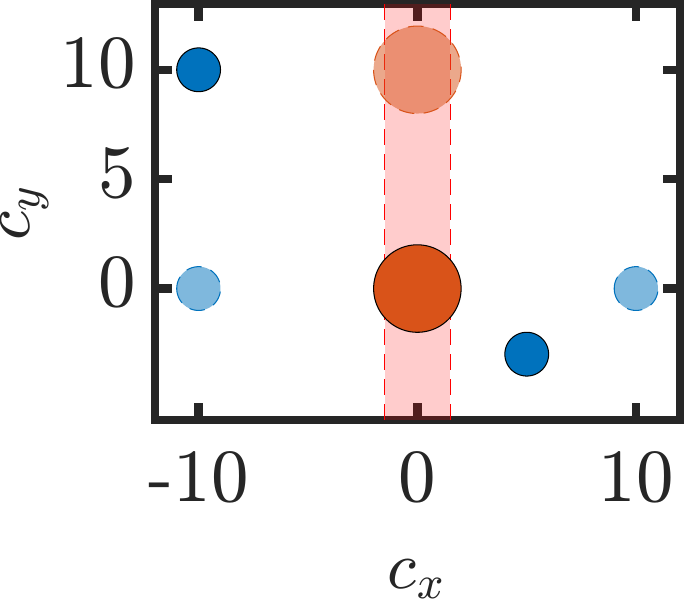}
    \caption{$t=0$}
    \label{fig:coop-discs:1}
  \end{subfigure}
  \begin{subfigure}{0.19\linewidth}
    \includegraphics[width=\linewidth]{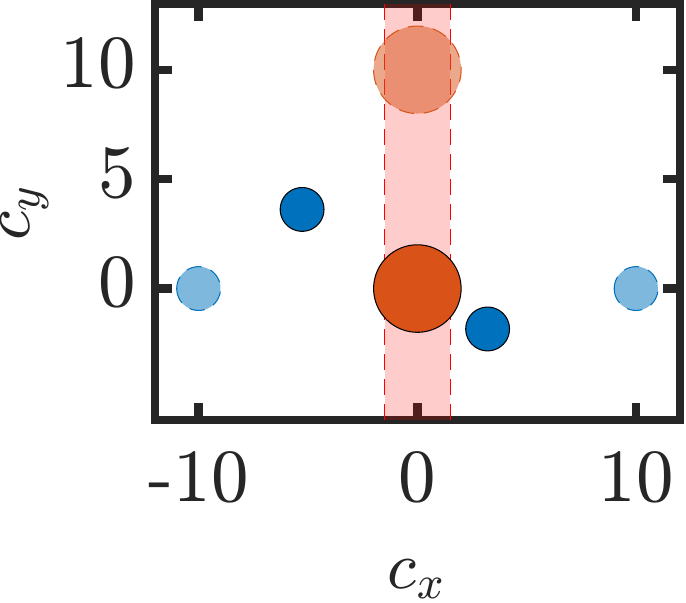}
    \caption{$t=0.8045$}
  \end{subfigure}
  \begin{subfigure}{0.19\linewidth}
    \includegraphics[width=\linewidth]{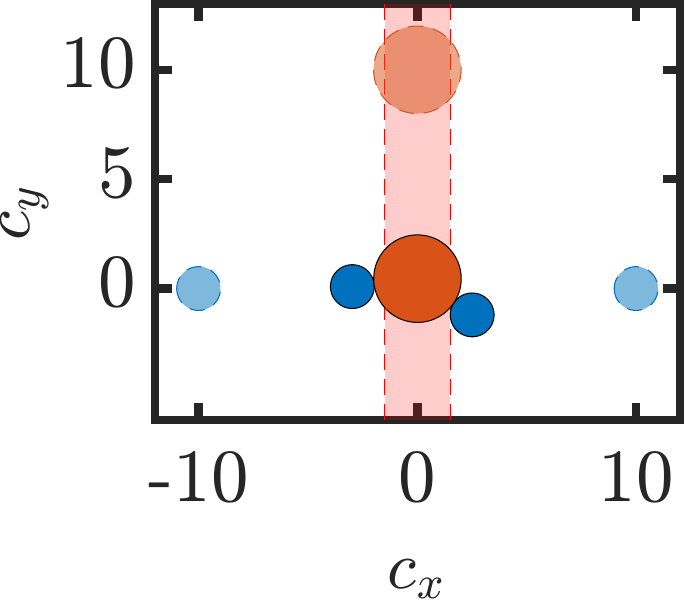}
    \caption{$t=1.3545$}
  \end{subfigure}
  \begin{subfigure}{0.19\linewidth}
    \includegraphics[width=\linewidth]{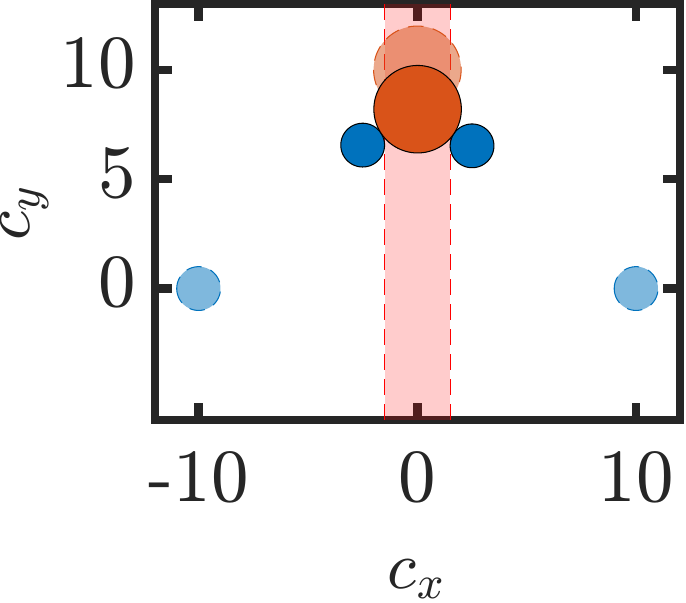}
    \caption{$t=3.3756$}
  \end{subfigure}
  \begin{subfigure}{0.19\linewidth}
    \includegraphics[width=\linewidth]{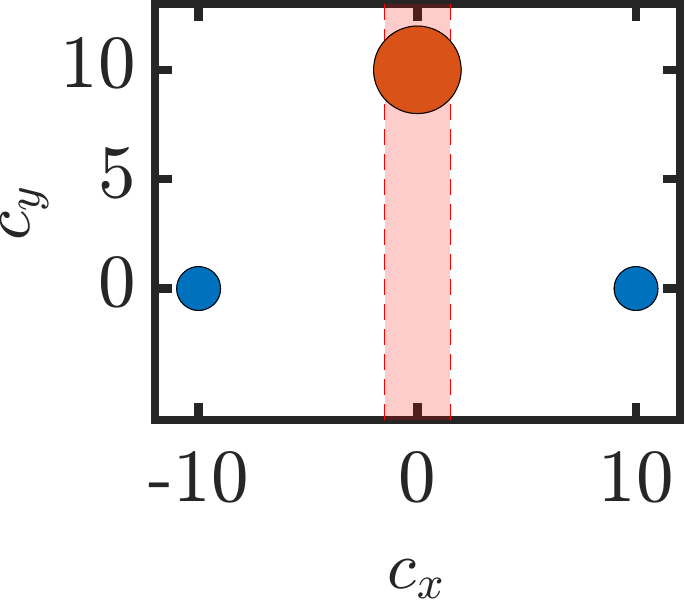}
    \caption{$t=5.0000$}
  \end{subfigure}
  \vspace{-0.2cm}
  \caption{Several frames of the solution for the collaborative manipulation problem.}
  \vspace{-0.8cm}
  \label{fig:coop-discs}
\end{figure*}
In this section, we first use the system in \Cref{ex:simple-pds} to verify the order accuracy of the FESD method applied to PDS.
Finally, we demonstrate an application of the method to a cooperative planar manipulation task formalized as an optimal control problem with the form in \cref{eq:ocp}.
These examples can be found on the branch \texttt{pds-examples} of \cite{Nurkanovic2022c}.

\subsection{Integration Order}
To verify the integration order accuracy of this extension to FESD, we use the simple PDS example described in \Cref{ex:simple-pds}.
We initialize the system at $t=0$ with $x_0 = \paren{\sqrt{2},\sqrt{2}}$ and simulate the system to a terminal time $T =\frac{11\pi}{12} + \sqrt{3}$.
We can show that the analytical solution for the terminal state $x(T) = \paren{-1,0}$ by decomposing the trajectory into three smooth sections: the initial circle of radius two, the sliding mode from the intersection of that circle with $x_2=-1$, and the circle of radius one which begins after we leave the sliding mode at $x = (0,-1)$.
The first of these sections occurs on the intervale $t\in \left[0,\frac{5\pi}{12}\right)$, the second on the interval $t\in \left[\frac{5\pi}{12}, \frac{5\pi}{12}+\sqrt{3}\right)$, and the third on $t\in \left[\frac{5\pi}{12}+\sqrt{3}, T\right]$.
The plot of this trajectory is shown in \Cref{fig:ex1-sol}.

This system was simulated with the software package \texttt{nosnoc} on the interval $[0,T]$ with the total $\Nfe = 10, 20, 40, 80, 140, 200, 400$, and an underlying Radau-IIA implicit RK scheme with $\ns=1,2,3,4$ stage points.
We calculate the terminal error as $\norm{x(T)-(-1, 0)}$.
The results are shown in \Cref{fig:order-plots}, in which the standard RK discretization without FESD only achieves between first and second-order accuracy as the step size is reduced, while the FESD discretization achieves the expected accuracy $O(h^{2\ns-1})$.
These results match various prior work on time-stepping methods (without event detection) applied to a variety of complementarity formulations of constrained dynamical systems where accuracy is limited to $O(h^2)$ independent of the order of the underlying schema used~\cite{Loetstedt1984}.
\subsection{Collaborative Manipulation}
We present a collaborative manipulation problem in which the controller must use two actuators with limited range to transport an unactuated disc.
The PDS formulation of this problem is inspired by the non-overlapping disc constraints of the first-order sweeping process models described in~\cite{Cao2021,Mordukhovich2023}.
The state and controls of the system are $x=(x_1,x_2,x_3)\in\R^6$, and $u=(u_1,u_2)\in\R^4$.
The smooth ODE is $f(x,u) = (u_1,u_2,0,0)$ and set $\C$ defined by $c_j(x) =\norm{x_3(t) - x_j(t)}^2 - (R_3 + R_j)^2$ with ${R_1=1}$, ${R_2=1}$, ${R_3=2}$, for ${j =1,2}$.
The system starts at ${x_0= \transp{(-10,10,5,-5,0,0)}}$.
With this, we formulate the OCP:
\begin{mini*}
  {\substack{x(\cdot). u(\cdot)}}
  {\int_{0}^{T}\!\transp{u(t)}Ru(t) \dd t\! + \! \transp{(x(T)-x_T)}Q_T(x(T)-x_T)}
  {}
  {}
  \addConstraint{x(t)}{=\proj{\tancone{\C}{x(t)}}{f(x(t),u(t))},\ }{t\in[0,T]}
  \addConstraint{x(0)}{=x_0}
  \addConstraint{-10\le u_i}{\le 10}
  \addConstraint{x_{1,1}}{\le -2.5}
  \addConstraint{x_{2,1}}{\ge 2.5.}
\end{mini*}
We use the weighting matrices $R = \mathrm{diag}(10^{-4}, 10^{-4})$, $Q_T = \mathrm{diag}(1,1,1,1,10^3,10^3)$, and target positions ${x_T=(-10,0,10,0,0,10)}$.
The additional constraints on the state enforce that neither actuator can individually accomplish the goal.
The initial configuration is shown in \Cref{fig:coop-discs:1}, where the solid blue and orange discs are the initial locations, the transparent discs represent the target configuration and the red region is the zone that any part of the actuators, the blue discs, may not enter.
This OCP is then discretized with FESD with a Radau-IIA scheme with $\ns = 2$.
We display several frames of the optimal solution in \Cref{fig:coop-discs}.
A video of several other solutions to this kind of manipulation problem can be found at \href{https://youtu.be/HXHAbjxC6rw}{https://youtu.be/HXHAbjxC6rw}.
\section{Conclusions and Future Work}
This paper provides an extension of the Finite Elements with Switch Detection (FESD) method to Projected Dynamical Systems (PDS) and a certain class of first-order sweeping processes.
We show that this method recovers the accuracy of higher-order Runge-Kutta discretizations and shows its application to optimal control with a planar manipulation task.
In future extensions, we would like to further tackle first-order Sweeping processes with moving, time-dependent sets.
\bibliographystyle{ieeetran}
\bibliography{syscop}
\end{document}